\def\@journal@url{http://www.vmsta.org}
\def\@credit{%
  \vbox to 0pt{%
    \vskip-1.85pc
    \hskip-\textwidth
    \noindent
    \raise4mm\hbox to \textwidth{%
      \footnotesize
      \href{\@journal@url}{www.vmsta.org}%
      \hfill
      \href{\@vtex@url}{\includevtexlogo}%
      }%
    }%
  }
\newcommand{\rrvert}{\vert}
\newcommand{\llvert}{\vert}
\newtheorem{thm}{Theorem}
\newtheorem{lem}{Lemma}
\newtheorem{cor}{Corollary}
\theoremstyle{definition}
\newtheorem{defin}{Definition}
\begin{document}

\begin{frontmatter}
\pretitle{Research Article}

\title{Confidence regions in Cox proportional hazards model with
measurement errors and unbounded parameter~set}

\author{\inits{O.}\fnms{Oksana} \snm{Chernova}\thanksref{cor1}\ead[label=e1]{chernovaoksan@gmail.com}}
\thankstext[type=corresp,id=cor1]{Corresponding author.}
\author{\inits{A.}\fnms{Alexander} \snm{Kukush}\ead[label=e1]{alexander\_kukush@univ.kiev.ua}}
\address{\institution{Taras Shevchenko National University of Kyiv},
Kyiv, \cny{Ukraine}}



\markboth{O. Chernova, A. Kukush}{Confidence regions in Cox
proportional hazards model}

\begin{abstract}
Cox proportional hazards model with measurement errors is
considered. In Kukush and Chernova (2017), we elaborated a
simultaneous estimator of the baseline hazard rate $\lambda(\cdot)$
and the regression parameter $\beta$, with the unbounded parameter set
$\varTheta=\varTheta_{\lambda}\times\varTheta_{\beta}$, where
$\varTheta_{\lambda}$ is a closed convex subset of $C[0,\tau]$ and
$\varTheta_{\beta}$ is a compact set in $\mathbb{R}^m$. The estimator
is consistent and asymptotically normal. In the present paper, we
construct confidence intervals for integral functionals of
$\lambda(\cdot)$ and a confidence region for $\beta$ under
restrictions on the error distribution. In particular, we handle
the following cases: (a) the measurement error is bounded, (b) it
is a normally distributed random vector, and (c) it has independent
components which are shifted Poisson random variables.
\end{abstract}
\begin{keywords}
\kwd{Asymptotic normality}
\kwd{confidence region}
\kwd{consistent estimator}
\kwd{Cox proportional hazards model}
\kwd{measurement errors}
\kwd{simultaneous estimation of baseline hazard rate and regression parameter}
\end{keywords}
%

\received{\sday{16} \smonth{1} \syear{2018}}
\revised{\sday{17} \smonth{1} \syear{2018}}
\accepted{\sday{17} \smonth{1} \syear{2018}}
\publishedonline{\sday{31} \smonth{1} \syear{2018}}
\end{frontmatter}

\section{Introduction}

Survival analysis models time to an event of interest (e.g., lifetime).
It is a powerful tool in biometrics, epidemiology, engineering, and
credit risk assessment in financial institutions. The proportional
hazards model proposed in Cox (1972) \cite{cox1972} is a widely used
technique to characterize a relation between survival time and covariates.

Our model is presented in Augustin (2004) \cite{Augustin} where the
baseline hazard function $\lambda(\cdot)$ is assumed to belong to a
parametric space, while we consider $\lambda(\cdot)$ belonging to a
closed convex subset of $C[0,\tau]$. In practice covariates are often
contaminated by errors, so we deal with errors-in-variables model.
Kukush et al. (2011) \cite{KukBar} derive a simultaneous estimator of the
baseline hazard rate $\lambda(\cdot)$ and the regression parameter
$\beta$ and prove the consistency of the estimator. At that, the
parameter set $\varTheta_{\lambda}$ for the
baseline hazard rate is assumed to be bounded and separated away from
zero. The asymptotic normality of the estimator is shown in Chimisov
and Kukush (2014) \cite{ChiKu}.
In \cite{KuCh,KuChArh} we construct an estimator $ (\hat
{\lambda}^{(1)}_n(\cdot),~\hat{\beta}^{(1)}_n )$ of $\lambda(\cdot
)$ and $\beta$ over the parameter set $\varTheta=\varTheta_{\lambda}\times
\varTheta_{\beta}~$, where $n$ is the sample size and $\varTheta_{\lambda}$
is a subset of $C[0,\tau]$, which is unbounded from above and not
separated away from zero. The estimator is consistent and can be
modified to be asymptotically normal.

The goal of present paper is to construct confidence
intervals for integral functionals of $\lambda(\cdot)$ and a
confidence region for $\beta$ based on the estimators from \cite{KuCh,KuChArh}.
We impose certain restrictions on the error
distribution. Actually we handle three cases:
(a) the measurement error is bounded, (b) it is a normally distributed
random vector, and (c) it has independent components which are shifted
Poisson random variables.

The paper is organized as follows. Section~\ref{Model} describes the
observation model, gives main assumptions, defines an estimator under
an unbounded parameter set, and states the asymptotic normality result
from \cite{KuCh,KuChArh}.
Sections~\ref{ConRpar} and \ref{ConfBHR} present the main results: a
confidence region for the regression parameter and confidence intervals
for integral functionals of the baseline hazard rate.
Section~\ref{auxEst} provides a method to compute auxiliary consistent
estimates, and Section~\ref{Concl} concludes.

Throughout the paper, all vectors are column ones, $\mathsf{E}$ stands
for the expectation, $\mathsf{Var}$ stands for the variance, and
$\mathsf{Cov}$ for the covariance matrix. A relation holds \textit
{eventually} if it is valid for all sample sizes $n$ starting from some
random number, almost surely.

\section{The model and estimator}\label{Model}

Let $T$ denote the lifetime and have the intensity function
\begin{equation*}
\label{eqindm} \lambda(t|X;\lambda_0,\beta_0)=
\lambda_0(t)\exp\bigl(\beta_0^\top X\bigr),
\quad t\geq0.
\end{equation*}
A covariate $X$ is a time-independent random vector distributed in
$\mathbb{R}^m$, $\beta$ is a parameter belonging to $\varTheta_\beta\subset
\mathbb{R}^m$, and $\lambda(\cdot) \in\varTheta_\lambda\subset C[0,\tau]$
is a baseline hazard function.

We observe censored data, i.e., instead of $T$ only a censored lifetime
\\$Y:=\min\{T,C\}$ and the censorship indicator $\varDelta:=I_{\{T\leq C\}
}$ are available, where the censor $C$ is distributed on a given
interval $[0,\tau]$. The survival function of censor $G_C(u):=1-F_C(u)$
is unknown. The conditional pdf of $T$ given $X$ is
\begin{equation*}
\label{pdf T} f_T(t|X)=\lambda(t | X;\lambda_0,
\beta_0)\exp \Biggl(-\int_0^t
\lambda(t | X;\lambda_0, \beta_0) d s \Biggr).
\end{equation*}
The conditional survival function of $T$ given $X$ equals
\begin{equation*}
\label{surT} G_T(t|X)= \exp \Biggl(-\int_0^t
\lambda(s|X;\lambda_0,\beta_0) ds \Biggr) = \exp
\Biggl(-e^{\beta_0^\top X}\int_0^t
\lambda_0(s)ds \Biggr).
\end{equation*}

We deal with an additive error model, which means that instead of $X$,
a surrogate variable
\[
W=X+U
\]
is observed. We suppose that a random error $U$ has known moment
generating function ${M_U(z):=\mathsf{E} e^{z^\top U}}$, where $||z||$
is bounded according to assumptions stated below. A couple $(T, X)$,
censor $C$, and measurement error $U$ are stochastically independent.

Introduce assumptions from \cite{KuCh,KuChArh}.
\begin{enumerate}
\item[(i)]\label{i} $\varTheta_\lambda\subset C[0,\tau]$ is the following
closed convex set of nonnegative functions
\begin{align*}
\varTheta_\lambda:=\bigl\{& f:[0,\tau]\to\mathbb{R} |\; f(t)\geq0,\forall t
\in[0,\tau]\ \text{and}\\
 &\big|f(t)-f(s)\big|\leq L|t-s|,\ \forall t,s\in[0,\tau]\bigr\},
\end{align*}
where $L>0$ is a fixed constant.\label{c1}
\item[(ii)] $\varTheta_\beta\subset\mathbb{R}^m$ is a compact set.
\end{enumerate}
\begin{enumerate}
\item[(iii)] $\mathsf{E} U=0$ and for some fixed $\epsilon>0$,
\begin{equation*}
\mathsf{E} e^{2 D\| U \|}<\infty, \ \text{with} \ D:=\max\limits
_{\beta\in\varTheta_\beta}\| \beta\| +
\epsilon.
\end{equation*}

\item[(iv)] $\mathsf{E} e^{2 D \| X \|}< \infty$, where $D$ is defined
in (iii).
\item[(v)] $\tau$ is the right endpoint of the distribution of $C$,
that is\\
$\mathsf{P}(C>\tau)=0$ and for all $\epsilon>0$,
$\mathsf{P}(C>\tau-\epsilon)>0$.
\item[(vi)] The covariance matrix of random vector $X$ is positive definite.

Denote
\begin{equation}
\label{K} \varTheta=\varTheta_\lambda\times\varTheta_\beta.
\end{equation}

\item[(vii)] The couple of true parameters $(\lambda_0,\beta_0)$
belongs to $\varTheta$ given in (\ref{K}), and moreover ${\lambda
_0(t)>0}$, $t\in[0,\tau]$.

\item[(viii)] $\beta_0$ is an interior point of $\varTheta_\beta$.
\item[(ix)] $\lambda_0\in\varTheta_\lambda^\epsilon$ for some $\epsilon
>0$, with\\
\begin{align*}
\varTheta_\lambda^\epsilon:=\bigl\{& f:[0,\tau]\to\mathbb{R}~ |~ f(t)
\geq \epsilon,\; \forall t \in[0,\tau]\ \text{and}\\
& \big|f(t)-f(s)\big|\leq(L-\epsilon)|t-s|,\forall t,s\in[0,\tau] \bigr\}.
\end{align*}
\item[(x)]\label{x} $\mathsf{P}(C>0) =1$.
\end{enumerate}

Consider independent copies of the model $(X_i, T_i, C_i, Y_i, \varDelta
_i, U_i, W_i)$,\\ ${i=1,\ldots,n}$. Based on triples $(Y_i,\varDelta_i,W_i)$,
${i=1,\ldots,n}$, we estimate true parameters $\beta_0$ and $\lambda
_0(t)$, $t \in[0,\tau]$.
Following Augustin (2004) \cite{Augustin}, we use the corrected partial
log-likelihood function
\[
Q_n^{cor}(\lambda,\beta):=\frac{1}{n} \sum
_{i=1}^{n} q(Y_i,\varDelta_i,W_i;\lambda,\beta),
\]
with
\[
q(Y,\varDelta,W;\lambda,\beta):=\varDelta\cdot\bigl(\log\lambda(Y)+
\beta^\top W\bigr)-\frac{\exp(\beta^\top W)}{M_U(\beta)}\int_0^Y
\lambda(u)du.
\]

The estimator \cite{KuCh,KuChArh} of the baseline hazard rate
$\lambda(\cdot)$ and
parameter $\beta$ is defined as follows.
\begin{defin}\label{def1}
Fix a sequence $\{\varepsilon_n\}$ of positive numbers, with
$\varepsilon_n\downarrow0$, as $n\to\infty$. The corrected estimator
$ (\hat{\lambda}^{(1)}_n,\hat{\beta}^{(1)}_n )$ of $(\lambda
,\beta)$ is a Borel measurable function of observations $(Y_i,\varDelta
_i,W_i)$, $i=1,\ldots,n$, with values in $\varTheta$ and such that
\begin{equation}
\label{def} Q_n^{cor} \bigl(\hat{\lambda}^{(1)}_n,
\hat{\beta}^{(1)}_n \bigr)\geq \sup_{(\lambda,\beta)\in\varTheta}Q_n^{cor}(
\lambda,\beta)-\varepsilon_n.
\end{equation}
\end{defin}

Theorem 3 from \cite{KuCh,KuChArh} proves that under conditions
(i) to (vii) the corrected estimator $ (\hat{\lambda}^{(1)}_n,\hat
{\beta}^{(1)}_n )$ is a strongly consistent estimator of the true
parameters $(\lambda_0,\beta_0)$. In the proof of Theorem 3 from \cite
{KuCh,KuChArh}, it is shown that \textit{eventually} and for
$R$ large enough, the upper bound on the right-hand side of (\ref{def})
can be taken over the set $\varTheta^R:=\varTheta_{\lambda}^R\times\varTheta
_\beta$, with
\begin{equation*}
\varTheta_{\lambda}^R:=\varTheta_{\lambda}\cap
\bar{B}(0,R),
\end{equation*}
where $\bar{B}(0,R)$ denotes the closed ball in $C[0,\tau]$ with center
in the origin and radius $R$. Thus, we assume that for all $n\geq1$,
\begin{equation}
\label{defeqi} Q_n^{cor} \bigl(\hat{\lambda}^{(1)}_n,
\hat{\beta}^{(1)}_n \bigr)\geq \sup_{(\lambda,\beta)\in\varTheta^R}Q_n^{cor}(
\lambda,\beta)-\varepsilon_n
\end{equation}
and $(\hat{\lambda}^{(1)}_n,\hat{\beta}^{(1)}_n)\in\varTheta^R$. Notice
that $\varTheta^R$ is a compact set in $C[0,\tau]$.

Definition 2 from \cite{KuCh,KuChArh} provides, based on $
(\hat{\lambda}^{(1)}_n,\hat{\beta}^{(1)}_n )$,
a modified estimator $ (\hat{\lambda}^{(2)}_n,\hat{\beta
}^{(2)}_n )$ which is consistent and asymptotically normal.

\begin{defin} The modified corrected estimator $ (\hat{\lambda
}^{(2)}_n,\hat{\beta}^{(2)}_n )$ of $(\lambda,\beta)$ is a Borel
measurable function of observations $(Y_i,\varDelta_i,W_i)$,
${i=1,\ldots,n}$, with values in $\varTheta$ and such that
\begin{equation*}
\label{def3} \bigl(\hat{\lambda}^{(2)}_n,\hat{
\beta}^{(2)}_n \bigr)= %
\begin{cases} \arg\max\{Q_n^{cor}(\lambda,\beta)\ | \ (\lambda,\beta
)\in\varTheta,\;\mu_\lambda\geq\frac{1}{2}\mu_{\hat{\lambda}^{(1)}_n}\},
&\text{if~}\; \mu_{\hat{\lambda}^{(1)}_n}>0;\\
 (\hat{\lambda}^{(1)}_n,\hat{\beta}^{(1)}_n ),&  \text{otherwise},
\end{cases} %
\end{equation*}
where $\mu_\lambda:=\min_{t\in[0,\tau]} \lambda(t)$.
\end{defin}

Below we use notations from \cite{ChiKu}. Let
\begin{align*}
a(t)&=\mathsf{E}\bigl[Xe^{\beta_0^\top X}G_T(t|X)\bigr], \quad b(t)=
\mathsf {E}\bigl[e^{\beta_0^\top X}G_T(t|X)\bigr], \quad \varLambda(t)=
\int_0^t \lambda_0(t) dt,\\
p(t)&=\mathsf{E}\bigl[XX^\top e^{\beta_0^\top X}G_T(t|X)
\bigr], \quad T(t)=p(t)b(t)-a(t)a^\top(t), \quad K(t)=\frac{\lambda_0(t)}{b(t)},\\
A&=\mathsf{E} \Biggl[XX^\top e^{\beta_0^\top X} \int_0^Y
\lambda_0 (u) du \Biggr], \quad M=\int_0^{\tau}
T(u) K(u)G_c(u)du.
\end{align*}
For $i=1,2,\ldots$, introduce random variables
\[
\zeta_i=-\frac{\varDelta_i a(Y_i)}{b(Y_i)}+\frac{\exp(\beta_0^\top
W_i)}{M_U(\beta_0)}\int
_0^{Y_i}a(u)K(u)du+\frac{\partial q}{\partial
\beta}(Y_i,
\varDelta_i,W_i,\beta_0,\lambda_0),
\]
with
\[
\frac{\partial q}{\partial\beta}(Y,\varDelta,W;\lambda,\beta)=\varDelta \cdot W-\frac{M_U(\beta)W-\mathsf{E}(U e^{\beta^\top U})}{M_U(\beta)^2}
\exp\bigl(\beta^\top W\bigr)\int_0^Y
\lambda(u)du.
\]
Let
\begin{align*}
\varSigma_{\beta}&=4\cdot\mathsf{Cov}(\zeta_1),\quad m(
\varphi_\lambda)=\int_0^{\tau}
\varphi_\lambda(u)a(u)G_C(u)du,\\
\sigma^2_{\varphi}&=4\cdot\mathsf{Var}~ \bigl\langle
q'(Y,\varDelta,W,\lambda _0,\beta_0),\varphi
\bigr\rangle=4\cdot\mathsf{Var}~\xi (Y,\varDelta, W ),
\end{align*}
with
\begin{equation}
\label{xi} %
\begin{split} \xi (Y,\varDelta, W ) &=\frac{\varDelta\cdot\varphi_\lambda
(Y)}{\lambda_0(Y)}-
\frac{\exp(\beta_0^\top W)}{M_U(\beta_0)}\int_0^Y \varphi_\lambda(u)du
+\varDelta\cdot\varphi_{\beta}^\top W
\\
&\quad- \varphi_{\beta}^\top \frac{M_U(\beta_0)W- \mathsf{E}[U e^{\beta
^\top_0U}]}{M_U(\beta_0)^2}\exp\bigl(
\beta_0^\top W\bigr)\int_0^Y
\lambda_0(u)du, \end{split} %
\end{equation}
where $\varphi=(\varphi_{\lambda},\varphi_{\beta})\in C[0,\tau]\times
\mathbb{R}^m$
and $q'$ denotes the Fr\'echet derivative.

\begin{thm}[\cite{KuCh,KuChArh}]\label{th3} Assume conditions
(i) -- (x). Then $M$ is nonsingular and
\begin{equation}
\label{beta2} \sqrt{n}\bigl(\hat{\beta}^{(2)}_n-
\beta_0\bigr)\xrightarrow{\text {d}}N_m
\bigl(0,M^{-1}\varSigma_{\beta}M^{-1}\bigr).
\end{equation}
Moreover, for any Lipschitz continuous function $f$ on $[0,\tau]$,
\begin{equation*}
\sqrt{n}\int_0^\tau\bigl(\hat{
\lambda}^{(2)}_n-\lambda_0\bigr)
(u)f(u)G_C(u) du\xrightarrow{\text{d}}N\bigl(0,\sigma^2_{\varphi}(f)
\bigr),
\end{equation*}
where $\sigma^2_{\varphi}(f)=\sigma^2_{\varphi}$ with $\varphi=(\varphi
_{\lambda},\varphi_{\beta})$, $\varphi_{\beta}=-A^{-1}m(\varphi_{\lambda
})$ and $\varphi_\lambda$ is a unique solution in $C[0, \tau]$ to the
Fredholm integral equation
\begin{equation*}
\frac{\varphi_\lambda(u)}{K(u)}-a^\top(u)A^{-1}m(\varphi_{\lambda
})=f(u),
\quad u\in[0, \tau].
\end{equation*}
\end{thm}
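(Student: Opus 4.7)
The plan is to derive the limit laws from the first-order optimality conditions of $(\hat{\lambda}^{(2)}_n,\hat{\beta}^{(2)}_n)$, Taylor-expanded around $(\lambda_0,\beta_0)$, and then to apply a multivariate central limit theorem to the empirical score. By construction, the modified estimator satisfies $\mu_{\hat{\lambda}^{(2)}_n}\geq\frac{1}{2}\mu_{\hat{\lambda}^{(1)}_n}$, and the strong consistency result (Theorem 3 of \cite{KuCh,KuChArh}) together with (ix) ensures that eventually $\hat{\lambda}^{(2)}_n$ is bounded away from $0$ and $\hat{\beta}^{(2)}_n$ lies in the interior of $\Theta_\beta$ by (viii). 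Therefore the Fr\'echet derivative of $Q_n^{cor}$ vanishes at $(\hat{\lambda}^{(2)}_n,\hat{\beta}^{(2)}_n)$ in every admissible direction $\varphi=(\varphi_\lambda,\varphi_\beta)$, which opens the door to a standard M-estimator expansion.

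Next I would Taylor expand the score in the form
\[
0=\frac{1}{n}\sum_{i=1}^n\bigl\langle q'(Y_i,\Delta_i,W_i;\lambda_0,\beta_0),\varphi\bigr\rangle+\bigl\langle H_n(\tilde{\lambda},\tilde{\beta})\cdot(\hat{\lambda}^{(2)}_n-\lambda_0,\hat{\beta}^{(2)}_n-\beta_0),\varphi\bigr\rangle,
\]
where $H_n$ is the empirical Hessian evaluated at an intermediate point. Since $\mathsf{E}\,q'(Y,\Delta,W;\lambda_0,\beta_0)=0$ (this is the whole point of the correction $M_U(\beta)$ in $q$), the first term multiplied by $\sqrt{n}$ is a normalized sum of i.i.d.\ zero-mean random elements whose covariance can be computed explicitly; when $\varphi=(0,e_j)$ this covariance reduces to $4\mathsf{Cov}(\zeta_1)=\Sigma_\beta$, and when $\varphi=(\varphi_\lambda,\varphi_\beta)$ it reduces to $\sigma_\varphi^2$ in view of (\ref{xi}). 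By the law of large numbers $H_n$ converges to a deterministic bilinear form whose $\beta$-block, after integrating out the $\lambda$-variation, is precisely $M$; nonsingularity of $M$ follows since $T(u)=p(u)b(u)-a(u)a^\top(u)$ is the $b(u)$-weighted conditional covariance of $X$ (positive definite by (vi) and the strict positivity of $b$), and $K(u)G_C(u)>0$ on an interval of positive length by (v), (vii), and (x).

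The statement for $\hat{\beta}^{(2)}_n$ is then obtained by testing the score equation with $\varphi=(0,e_j)$, $j=1,\ldots,m$, inverting $M$, and applying the multivariate CLT. For the integral functional of $\lambda$ I would use a duality trick: pick $\varphi$ so that the $(\hat{\lambda}-\lambda_0)$-part of the Hessian pairing reproduces the functional $(\hat{\lambda}-\lambda_0)\mapsto\int_0^\tau(\hat{\lambda}-\lambda_0)(u)f(u)G_C(u)\,du$. A direct computation shows this requires $\varphi_\beta=-A^{-1}m(\varphi_\lambda)$ together with the Fredholm equation
\[
\frac{\varphi_\lambda(u)}{K(u)}-a^\top(u)A^{-1}m(\varphi_\lambda)=f(u),\qquad u\in[0,\tau].
\]
Then the left-hand side becomes $\sqrt{n}\int(\hat{\lambda}^{(2)}_n-\lambda_0)fG_C\,du$ up to a negligible remainder, while the right-hand side is $\sqrt{n}$ times an average of i.i.d.\ variables with variance $\sigma^2_\varphi(f)$, and the one-dimensional CLT closes the argument.

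The main obstacle will be (a) justifying the Hessian expansion uniformly over the infinite-dimensional compact set $\Theta^R$ (dominated convergence and uniform equicontinuity arguments powered by the moment assumptions (iii), (iv) and the Lipschitz bound in (i)), and (b) establishing existence and uniqueness of the solution $\varphi_\lambda\in C[0,\tau]$ to the Fredholm equation above. The latter should follow from the Fredholm alternative: the operator $\varphi_\lambda\mapsto K\cdot a^\top A^{-1}m(\varphi_\lambda)$ has a continuous kernel and is therefore compact on $C[0,\tau]$, while injectivity of identity minus this operator reduces to nonsingularity of the Hessian that we have already shown.
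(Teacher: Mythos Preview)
This theorem is not proved in the present paper; it is quoted from \cite{KuCh,KuChArh} (the bounded-parameter-set version being \cite{ChiKu}), so there is no in-paper proof against which to compare your proposal.

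Your outline follows the expected M-estimator route and is broadly sound, but there is one concrete slip that would derail the $\beta$-part of the argument. You assert that testing the score with $\varphi=(0,e_j)$ produces an i.i.d.\ sum whose covariance is $\Sigma_\beta=4\,\mathsf{Cov}(\zeta_1)$. It does not: with $\varphi_\lambda\equiv 0$ the pairing \eqref{xi} collapses to the $j$-th component of $\partial q/\partial\beta$, whereas $\zeta_i$ carries the two additional terms
\[
-\frac{\Delta_i\, a(Y_i)}{b(Y_i)}+\frac{\exp(\beta_0^\top W_i)}{M_U(\beta_0)}\int_0^{Y_i}a(u)K(u)\,du.
\]
A short computation shows these are precisely the $\lambda$-directional derivative of $q$ along $\varphi_\lambda=-a_jK$, so that in fact $\zeta_{i,j}=\bigl\langle q'(Y_i,\Delta_i,W_i;\lambda_0,\beta_0),\,(-a_jK,\,e_j)\bigr\rangle$. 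The profiled direction $(-a_jK,e_j)$ is what annihilates the off-diagonal $(\lambda,\beta)$-block of the limiting Hessian and makes $M$ emerge as the effective information for $\beta$; testing with $(0,e_j)$ leaves that block in place and produces neither $\Sigma_\beta$ on the score side nor $M$ on the Hessian side. You carry out exactly this profiling step for the $\lambda$-functional (choosing $\varphi_\beta=-A^{-1}m(\varphi_\lambda)$ to eliminate the $\beta$-variation), so the fix is simply to perform the symmetric elimination when isolating $\hat\beta^{(2)}_n-\beta_0$.
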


\section{Confidence regions for the regression parameter}\label{ConRpar}

Denote as $\mathsf{E}_X[\cdot]$ the conditional expectation given a
random variable $X$. Remember that $M_U(z)=\mathsf{E} e^{z^\top U}$.
For simplicity of notation, we write $M_{k, \beta}$ instead of
$M_U((k+1)\beta)$.
Using differentiation in $z$ one can easily prove the following.
\begin{lem}\label{lema}
The equalities hold true:
\begin{align*}
e^{z^\top X}&=\frac{\mathsf{E}_X[e^{z^\top W}]}{M_U(z)},\\
Xe^{z^\top X}&=\frac{1}{M_U(z)} \biggl(\mathsf{E}_X
\bigl[We^{z^\top W}\bigr]-\frac
{\mathsf{E}[U e^{z^\top U}]}{M_U(z)}\mathsf{E}_X
\bigl[e^{z^\top W}\bigr] \biggr),\\
 XX^\top e^{z^\top X}&= \frac{1}{M_U(z)}
\biggl(\mathsf{E}_X\bigl[WW^\top e^{z^\top W}\bigr]-2
\frac{\mathsf
{E}[U e^{z^\top U}]}{M_U(z)}\mathsf{E}_X \bigl[W^\top
e^{z^\top W}\bigr] -
\\
&\quad - \biggl(\frac{\mathsf{E}[UU^\top e^{z^\top U}]}{M_U(z)}-2\frac
{\mathsf{E}[Ue^{z^\top U}]\cdot\mathsf{E}[U^\top e^{z^\top
U}]}{M_U^2(z)} \biggr)
\mathsf{E}_X \bigl[e^{z^\top W}\bigr] \biggr).
\end{align*}
\end{lem}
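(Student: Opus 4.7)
The plan is to exploit the independence of $X$ and $U$ together with the relation $W=X+U$. Conditioning on $X$ gives $e^{z^\top W}=e^{z^\top X}e^{z^\top U}$, and any function of $U$ alone then separates out of $\mathsf{E}_X$: for every measurable $\psi$ whose product with $e^{z^\top U}$ is integrable, $\mathsf{E}_X\bigl[\psi(U)e^{z^\top W}\bigr]=e^{z^\top X}\mathsf{E}\bigl[\psi(U)e^{z^\top U}\bigr]$. The first identity is the case $\psi\equiv 1$ after dividing by $M_U(z)$.

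For the second identity I would follow the hint and differentiate the first identity componentwise in $z$. The interchange of $\nabla_z$ with both expectations is justified by dominated convergence using assumptions (iii)--(iv), which make $e^{z^\top W}$ and $e^{z^\top U}$ together with their first two derivatives in $z$ integrable in a neighborhood of the relevant range of $z$. Using $\nabla_z M_U(z)=\mathsf{E}[Ue^{z^\top U}]$ and $\nabla_z\mathsf{E}_X[e^{z^\top W}]=\mathsf{E}_X[We^{z^\top W}]$, the product rule applied to $e^{z^\top X}M_U(z)=\mathsf{E}_X[e^{z^\top W}]$ yields $Xe^{z^\top X}M_U(z)+e^{z^\top X}\mathsf{E}[Ue^{z^\top U}]=\mathsf{E}_X[We^{z^\top W}]$. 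Solving for $Xe^{z^\top X}$ and substituting the first identity for $e^{z^\top X}$ on the right gives the stated formula.

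For the third identity, apply the row gradient $\nabla_z^\top$ to the formula just obtained for $Xe^{z^\top X}$; the left-hand side then becomes $XX^\top e^{z^\top X}$. The product and quotient rules yield five contributions: the principal term $M_U(z)^{-1}\mathsf{E}_X[WW^\top e^{z^\top W}]$, a pair of mutually transposed cross terms of the form $M_U(z)^{-2}\mathsf{E}[Ue^{z^\top U}]\mathsf{E}_X[W^\top e^{z^\top W}]$, the Hessian term $-M_U(z)^{-2}\mathsf{E}[UU^\top e^{z^\top U}]\mathsf{E}_X[e^{z^\top W}]$, and $+2M_U(z)^{-3}\mathsf{E}[Ue^{z^\top U}]\mathsf{E}[U^\top e^{z^\top U}]\mathsf{E}_X[e^{z^\top W}]$ coming from differentiating $M_U(z)^{-2}$. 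Collecting and factoring out $M_U(z)^{-1}$, while writing the two transposed cross terms as a single symmetric expression with coefficient $2$ (which is the symmetric shorthand used in the lemma), reproduces the displayed form.

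The calculations are entirely mechanical. The only point that requires real care is the vector/matrix bookkeeping in the third identity, specifically recognizing that the two transposed outer products $\mathsf{E}[Ue^{z^\top U}]\mathsf{E}_X[W^\top e^{z^\top W}]$ combine via symmetrization into the factor-$2$ term shown in the lemma. The interchange of $\nabla_z$ and $\nabla_z^\top$ with $\mathsf{E}_X$ and $\mathsf{E}$ is a routine dominated-convergence step, for which the exponential-moment assumptions (iii) and (iv) provide integrable majorants in a neighborhood of the relevant values of $z$.
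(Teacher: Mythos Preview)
Your proposal is correct and follows exactly the route indicated in the paper, which merely states ``Using differentiation in $z$ one can easily prove the following'' and gives no further argument. Your observation that the two transposed cross terms $\mathsf{E}[Ue^{z^\top U}]\,\mathsf{E}_X[W^\top e^{z^\top W}]$ and $\mathsf{E}_X[We^{z^\top W}]\,\mathsf{E}[U^\top e^{z^\top U}]$ are being combined into a single symmetric factor-$2$ term is an accurate reading of the paper's shorthand.
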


Now, we state conditions on measurement error $U$ under which one can
construct unbiased estimators for $a(t)$, $b(t)$ and $p(t)$, $t\in
[0,\tau]$.

\begin{thm}\label{soleq}
Suppose that for any $\beta\in\varTheta_{\beta}$ and $A>0$,
\begin{equation}
\label{maincond} \sum_{k=0}^\infty
\frac{a_{k+1}(\beta)}{k!}A^k<\infty,
\end{equation}

with
\[
a_{k+1}(\beta) := \frac{\mathsf{E}\|U\|^2 e^{(k+1)\beta^\top U}}{M_{k,
\beta}}~.
\]

Then there exist functions $B(\cdot,\cdot)$, $A(\cdot,\cdot)$ and
$P(\cdot,\cdot)$ which satisfy deconvolution equations:
\begin{enumerate}
\item[(a)] $\mathsf{E}_X [B(W,t)]=\exp (\beta^\top X-\varLambda
(t)e^{\beta^\top X} )$,
\item[(b)] $\mathsf{E}_X [A(W,t)]=X \exp (\beta^\top X-\varLambda
(t)e^{\beta^\top X} )$,

\item[(c)] $\mathsf{E}_X [P(W,t)]=XX^\top \exp (\beta^\top
X-\varLambda(t)e^{\beta^\top X} )$; ~$t\in[0,\tau]$.
\end{enumerate}
\end{thm}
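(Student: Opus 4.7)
The plan is to use the Taylor expansion of the nested exponential and reduce the theorem to a termwise application of Lemma~\ref{lema}. Since
\[
\exp\bigl(\beta^\top X-\varLambda(t)e^{\beta^\top X}\bigr)
=\sum_{k=0}^{\infty}\frac{(-\varLambda(t))^k}{k!}\,e^{(k+1)\beta^\top X},
\]
and analogous identities hold for $X\exp(\cdot)$ and $XX^\top\exp(\cdot)$, I would plug in Lemma~\ref{lema} on every term and \emph{define} the deconvolution functions as the resulting power series in $\varLambda(t)$. Concretely, set
\begin{align*}
B(W,t)&:=\sum_{k=0}^{\infty}\frac{(-\varLambda(t))^k}{k!}\cdot\frac{e^{(k+1)\beta^\top W}}{M_{k,\beta}},\\
A(W,t)&:=\sum_{k=0}^{\infty}\frac{(-\varLambda(t))^k}{k!}\cdot\frac{1}{M_{k,\beta}}\left(W-\frac{\mathsf{E}[Ue^{(k+1)\beta^\top U}]}{M_{k,\beta}}\right)e^{(k+1)\beta^\top W},
\end{align*}
and define $P(W,t)$ in the analogous form obtained from the third identity of Lemma~\ref{lema}. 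Once the interchange of $\mathsf{E}_X$ with the sum is allowed, identities (a)--(c) drop out immediately by matching the resulting series with the Taylor expansion displayed above.

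The only real work is justifying the swap, which I would do by Tonelli on the absolute series. For $B$ this is painless: the $k$-th term has conditional expectation $\varLambda(t)^k e^{(k+1)\beta^\top X}/k!$, which sums to $e^{\beta^\top X}\exp(\varLambda(t)e^{\beta^\top X})<\infty$ pointwise in $X$, without invoking (\ref{maincond}) at all. For $A$ and $P$ the cross terms carry the factors $\mathsf{E}[Ue^{(k+1)\beta^\top U}]/M_{k,\beta}$ and $\mathsf{E}[UU^\top e^{(k+1)\beta^\top U}]/M_{k,\beta}$, and these I would control by Cauchy--Schwarz: namely,
\[
\bigl\|\mathsf{E}[Ue^{(k+1)\beta^\top U}]\bigr\|\leq\sqrt{\mathsf{E}\|U\|^2 e^{(k+1)\beta^\top U}\cdot M_{k,\beta}}=\sqrt{a_{k+1}(\beta)}\cdot M_{k,\beta},
\]
and, similarly, $\|\mathsf{E}[UU^\top e^{(k+1)\beta^\top U}]\|\leq a_{k+1}(\beta)M_{k,\beta}$. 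Hypothesis (\ref{maincond}), applied with $A:=\varLambda(t)e^{\beta^\top X}$ (and, after a further Cauchy--Schwarz to handle the extra factor of $W$, also with $A:=\varLambda(t)\|X\|e^{\beta^\top X}$), then produces a finite majorant for $\mathsf{E}_X$ of the absolute series, so Tonelli applies and the series converge in $L^1(\mathsf{P}_{U})$ for $\mathsf{P}_X$-a.e.\ $X$.

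The main obstacle I anticipate is the bookkeeping for $P(W,t)$: the third identity of Lemma~\ref{lema} contains three distinct $U$-moments combined with $WW^\top$, $W$, and constant factors, so one must track each contribution separately, apply the two Cauchy--Schwarz bounds above, and verify that all arising sums of the form $\sum_k (\varLambda(t))^k a_{k+1}(\beta)/k!$ and $\sum_k (\varLambda(t))^k a_{k+1}(\beta)^{1/2}/k!$ (possibly multiplied by conditional moments of $\|W\|$) are finite thanks to (\ref{maincond}). Once this accounting is done, taking $\mathsf{E}_X$ and collapsing the series back via Lemma~\ref{lema} completes the proof of (a)--(c) uniformly in $t\in[0,\tau]$.
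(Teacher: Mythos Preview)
Your proposal is correct and follows essentially the same route as the paper: the paper likewise defines $B$, $A$, $P$ as the series obtained by applying Lemma~\ref{lema} termwise to the Taylor expansion of $\exp(\beta^\top X-\varLambda(t)e^{\beta^\top X})$, and justifies the interchange by showing $\sum_k \mathsf{E}_X\|\cdot_k\|<\infty$, using exactly your Cauchy--Schwarz bound $\mathsf{E}\|U\|e^{(k+1)\beta^\top U}\le \sqrt{a_{k+1}(\beta)}\,M_{k,\beta}$ to reduce the first-moment terms to the hypothesis~(\ref{maincond}). The only cosmetic difference is that the paper handles the cross terms in (b) and (c) by directly splitting $\|W\|\le\|X\|+\|U\|$ rather than invoking a second Cauchy--Schwarz with $A=\varLambda(t)\|X\|e^{\beta^\top X}$; in fact only $A=\varLambda(t)e^{\beta^\top X}$ is ever needed, so that part of your sketch can be simplified.
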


\begin{proof}
We find solutions to the equations in a form of series expansions using
the idea from Stefanski (1990) \cite{Stef}.

(a) Utilizing Taylor decomposition of the right-hand side, we obtain
\[
\exp \bigl(\beta^\top X-\varLambda(t)e^{\beta^\top X} \bigr)= \sum
_{k=0}^\infty g_k(X,t), \quad
g_k(X,t):=\frac{(-1)^k}{k!}\varLambda ^k(t)e^{(k+1)\beta^\top X}.
\]

Using Lemma \ref{lema} take for $k\geq0$
\[
B_k(W,t)=\frac{(-1)^k}{k!M_{k, \beta}}\varLambda^k(t)e^{(k+1)\beta^\top W},
\]
so that
$\mathsf{E}_X [B_k(W,t)]=g_k(X,t)$, $t\in[0,\tau]$.
If we ensure that
\[
\sum\limits
_{k=0}^\infty\mathsf{E}_X
\big|B_k(W,t)\big|<\infty,
\]
then $B(W,t)=\sum_{k=0}^\infty B_k(W,t)$ is a solution to the
first equation. We have
\[
\sum_{k=0}^\infty\mathsf{E}_X
\big|B_k(W,t)\big|=\sum_{k=0}^\infty
\frac
{\varLambda^k(t)}{k!} e^{(k+1)\beta^\top X}=\exp \bigl(\beta^\top X+
\varLambda(t)e^{\beta^\top
X} \bigr)<\infty.
\]
Here no additional restriction on $U$ is needed.\vadjust{\goodbreak}

(b) Similarly, we show that $A(W,t)=\sum_{k=0}^\infty A_k(W,t)$, with
\[
A_k(W,t):=\frac{(-1)^k}{k!M_{k, \beta}}\varLambda^k(t) \biggl[ W -
\frac
{\mathsf{E} [U e^{(k+1)\beta^\top U}]}{M_{k, \beta}} \biggr] e^{(k+1)\beta^\top W},
\]
is a solution to the second equation, if $\sum_{k=0}^\infty
\mathsf{E}_X\|A_k(W,t)\|<\infty$.
We have
\begin{align*}
 &\sum_{k=0}^\infty
\mathsf{E}_X\|A_k(W,t)\|
\\
&\quad=\sum_{k=0}^\infty\frac{\varLambda^k(t)}{k!M_{k, \beta}} \mathsf
{E}_X \biggl\llvert \biggl\llvert X+U -\frac{\mathsf{E} [U e^{(k+1)\beta^\top U}]}{M_{k,
\beta}} \biggr
\rrvert \biggr\rrvert e^{(k+1)\beta^\top(X+U)}
\\
&\quad\leq\|X\| \exp \bigl(\beta^\top X+\varLambda(t)e^{\beta^\top X}
\bigr) +2\sum_{k=0}^\infty\frac{\varLambda^k(t)}{k!}
\frac{\mathsf{E}
\|U\|e^{(k+1)\beta^\top U}}{M_{k, \beta}}e^{(k+1)\beta^\top X}.
\end{align*}
The latter sum is finite due to condition (\ref{maincond}).
Therefore, there exists a solution to the second equation.

(c) Finally, for the third equation we put
%
\begin{align*}
&P_k(W,t)\\
&\quad=\frac{(-1)^k\varLambda^k(t)}{k!M_{k, \beta}}
\biggl[WW^\top e^{(k+1)\beta^\top W}-2\frac{\mathsf{E} [U e^{(k+1)\beta^\top
U}]}{M_{k, \beta}} W^\top
e^{(k+1)\beta^\top W}
\\
&\qquad - \biggl(\frac{\mathsf{E}[UU^\top e^{(k+1)\beta^\top U}]}{M_{k,
\beta}}-2\frac{\mathsf{E} [U e^{(k+1)\beta^\top U}]\cdot\mathsf
{E}[U^\top e^{(k+1)\beta^\top U}]}{M_{k, \beta}^2} \biggr)
e^{(k+1)\beta
^\top W} \biggr].
\end{align*}

The matrix $P(W,t)=\sum_{k=0}^\infty P_k(W,t)$ is a solution to
the third equation if
\begin{equation}
\label{finite} \sum\limits
_{k=0}^\infty\mathsf{E}_X\big\|P_k(W,t)\big\|<
\infty.
\end{equation}
Hereafter $\|Q\|$ is the Euclidean norm of a matrix $Q$. We have
\begin{equation}
\label{normP} %
\begin{split} \sum\limits
_{k=0}^\infty
\mathsf{E}_X\big\|P_k(W,t)\big\|&\leq\sum
\limits
_{k=0}^\infty \frac{\varLambda^k(t)}{k!} \biggl[\frac{\mathsf{E}_X
[~\|W\|^2 e^{(k+1)\beta^\top W}~]}{M_{k, \beta}}
\\
&\quad +2\frac{\mathsf{E} [~\|U\| e^{(k+1)\beta^\top U}~] \cdot\mathsf
{E}_X [~\|W\| e^{(k+1)\beta^\top W}~]}{M_{k, \beta}^2}
\\
&\quad+\frac{\mathsf{E}[~\|U\|^2 e^{(k+1)\beta^\top U}~]\cdot\mathsf
{E}_X e^{(k+1)\beta^\top W}}{M_{k, \beta}^2}
\\
&\quad +2\frac{ (\mathsf{E} \|U\| e^{(k+1)\beta^\top U}
)^2\cdot\mathsf{E}_X e^{(k+1)\beta^\top W}}{M_{k, \beta}^3} \biggr]. \end{split} %
\end{equation}
%
The right-hand side of (\ref{normP}) is a sum of four series which can
be bounded similarly based on condition (\ref{maincond}). E.g., for the
last of the four series we have:\vadjust{\goodbreak}
\begin{align*}
\bigl(\mathsf{E} \|U\| e^{\frac{1}{2}(k+1)\beta^\top U} e^{\frac
{1}{2}(k+1)\beta^\top U} \bigr)^2~
&\leq\mathsf{E}\|U\|^2 e^{(k+1)\beta
^\top U} \cdot M_{k, \beta}~,\\
\mathsf{E}_X e^{(k+1)\beta^\top W}&=M_{k, \beta}\cdot
e^{(k+1)\beta^\top X},\\
\sum\limits
_{k=0}^\infty \frac{\varLambda^k(t) (\mathsf{E} \|U\|
e^{(k+1)\beta^\top U} )^2\cdot\mathsf{E}_X e^{(k+1)\beta^\top
W}}{k!~M_{k, \beta}^3}
&\leq\sum\limits
_{k=0}^\infty \frac{a_{k+1}(\beta)\varLambda^k(t)
e^{(k+1)\beta^\top X} }{k!}<\infty.
\end{align*}

Therefore, condition (\ref{maincond}) yields (\ref{finite}), and
$P(W,t)$ is a solution to the third equation.
\end{proof}

\begin{thm} The condition of Theorem \ref{soleq} is fulfilled in each
of the following cases:

(a) the measurement error $U$ is bounded,

(b) $U$ is normally distributed with zero mean and variance-covariance
matrix $\sigma^2_U I_m$, with $\sigma_U>0$, and

(c) $U$ has independent components $U_{(i)}$ which are shifted Poisson
random variables, i.e. $U_{(i)}=\tilde{U}_{(i)}-\mu_i$, where $\tilde
{U}_{(i)}\sim Pois(\mu_i)$, $i=1,\ldots, m$.
\end{thm}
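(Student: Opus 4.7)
My plan is to dispose of the three cases separately by bounding the quantity
$$a_{k+1}(\beta)=\frac{\mathsf{E}\|U\|^2 e^{(k+1)\beta^\top U}}{M_U((k+1)\beta)}$$
and then checking convergence of $\sum_{k\geq 0}a_{k+1}(\beta)A^k/k!$. The most useful viewpoint is to recognize this ratio as the second moment of $\|U\|$ under the exponentially tilted probability measure with density $e^{(k+1)\beta^\top u}/M_U((k+1)\beta)$ relative to the law of $U$; in all three cases this tilted measure has a convenient explicit form. Since $\varTheta_\beta$ is compact, I may treat $\|\beta\|$ as bounded by a constant throughout.

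For case (a), if $\|U\|\leq C$ almost surely, then trivially $a_{k+1}(\beta)\leq C^2$, uniformly in $k$, so the series is dominated by $C^2 e^A$. For case (b), if $U\sim N(0,\sigma_U^2 I_m)$ then under the tilt by $(k+1)\beta$ the vector $U$ becomes $N(\sigma_U^2(k+1)\beta,\sigma_U^2 I_m)$; hence $a_{k+1}(\beta)=m\sigma_U^2+\sigma_U^4(k+1)^2\|\beta\|^2$, a polynomial in $k$, and $\sum_{k}[m\sigma_U^2+\sigma_U^4(k+1)^2\|\beta\|^2]A^k/k!$ is finite because it is a finite linear combination of $e^A$ and its first two derivatives.

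Case (c) is the main obstacle. Since the components $U_{(i)}$ are independent, $M_U(z)=\prod_{i=1}^m \exp(\mu_i(e^{z_{(i)}}-1-z_{(i)}))$, and exponentially tilting the $i$-th component by $t_i$ maps $U_{(i)}=\tilde U_{(i)}-\mu_i$ with $\tilde U_{(i)}\sim\mathrm{Pois}(\mu_i)$ to $\tilde V_i-\mu_i$ with $\tilde V_i\sim\mathrm{Pois}(\mu_i e^{t_i})$. A short computation of the variance plus squared mean-shift then gives, with $t_i=(k+1)\beta_{(i)}$,
$$a_{k+1}(\beta)=\sum_{i=1}^m\bigl[\mu_i e^{(k+1)\beta_{(i)}}+\mu_i^2\bigl(e^{(k+1)\beta_{(i)}}-1\bigr)^2\bigr].$$
Setting $B:=\max_{\beta\in\varTheta_\beta}\|\beta\|_\infty$, this is bounded by a constant times $e^{2(k+1)B}$, which grows exponentially in $k$.

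The final step is then to show that this exponential-in-$k$ growth is still absorbed by the factorial $k!$:
$$\sum_{k=0}^\infty\frac{e^{2(k+1)B}A^k}{k!}=e^{2B}\exp\bigl(e^{2B}A\bigr)<\infty,$$
for any $A>0$, which is clear from the power series of the exponential. This establishes (\ref{maincond}) in case (c) and finishes the proof. The only subtlety worth flagging is that although the tilted Poisson moment blows up in $k$, the condition is precisely tailored so that the rapid decay of $1/k!$ outweighs the exponential blow-up, which is why the theorem holds even for unbounded errors.
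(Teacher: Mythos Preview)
Your proof is correct and follows essentially the same route as the paper: in each case you compute $a_{k+1}(\beta)$ exactly, obtaining the same formulas the paper derives, and then verify that the resulting growth in $k$ is dominated by $1/k!$. The only cosmetic difference is that you phrase the computation in terms of moments under the exponentially tilted law of $U$, whereas the paper obtains the identical expressions by differentiating the moment generating function twice; the two viewpoints are equivalent and lead to the same bounds.
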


\begin{proof}

(a) Let $\|U\|\leq K$. Then
\[
\quad\frac{\mathsf{E} \|U\|^2 e^{(k+1)\beta^\top U}}{M_{k, \beta}} \leq K^2 ,
\]
and (\ref{maincond}) holds true.

(b) For a normally distributed vector $U$ with components $U_{(i)}$, we
have $\mathsf{E} e^{tU_{(i)}}=\exp (\frac{t^2\sigma^2_U}{2}
)$. Differentiation twice in $t$ gives
\[
\mathsf{E} U_{(i)}^2e^{(k+1)\beta_iU_{(i)}}= \bigl(1+(k+1)^2
\beta_i^2 \sigma^2_U \bigr)
\sigma^2_U \exp \biggl(\frac{(k+1)^2\beta_i^2\sigma
^2_U}{2} \biggr),
\]
and
\[
\frac{\mathsf{E} U_{(i)}^2e^{(k+1)\beta^\top U}}{M_{k, \beta}}= \bigl(1+(k+1)^2\beta_i^2
\sigma^2_U \bigr)\sigma^2_U.
\]
Thus,
\[
\frac{\mathsf{E} \|U\|^2 e^{(k+1)\beta^\top U}}{M_{k, \beta}}=\sum\limits
_{i=1}^m
\bigl(1+(k+1)^2\beta_i^2\sigma_U^2
\bigr)\sigma_U^2.
\]
Then (\ref{maincond}) holds true.

(c) We have $M_{U_{(i)}}(t):=\mathsf{E}e^{tU_{(i)}}=\exp(\mu
_i(e^t-1)-\mu_i t)$.
Differentiation twice in $t$ gives
\begin{align*}
M''_{U_{(i)}}(t)&=\mathsf{E}U_{(i)}^2e^{U_{(i)}t}=
\mu_i^2\bigl( e^t-1\bigr)^2M_{U_{(i)}}(t)+
\mu_i e^tM_{U_{(i)}}(t),\\
\frac{\mathsf{E} U_{(i)}^2e^{(k+1)\beta^\top U}}{M_{k, \beta}}&= \mu _i^2 \bigl(e^{(k+1)\beta_i}-1
\bigr)^2+\mu_i e^{(k+1)\beta_i}\le\textrm {const}\cdot
e^{2(k+1)\cdot|\beta_i|},
\end{align*}
where the factor `const' does not depend of $k$. Thus,
\[
\frac{\mathsf{E} \|U\|^2 e^{(k+1)\beta^\top U}}{M_{k, \beta}}\le\textrm {const}\cdot\sum\limits
_{i=1}^m
\ e^{2(k+1)\cdot|\beta_i|},
\]
and condition (\ref{maincond}) holds. This completes the proof.
\end{proof}

Now, we can construct estimators of $a(t)$, $b(t)$ and $p(t)$ for $t\in
[0,\tau]$.
Take $\hat{\varLambda}(t):=\int_0^t\hat{\lambda}^{(2)}_n(s) ds$ as a
consistent estimator of $\varLambda(t)$, $t\in[0,\tau]$. Indeed, the
consistency of $\hat{\lambda}_n^{(2)}(\cdot)$ implies
\[
\sup_{t\in[0,\tau]}\big|\hat{\varLambda}(t)-\varLambda(t)\big|\rightarrow0
\]
a.s. as $n\rightarrow\infty$.

For any fixed $(\lambda,\beta)\in\varTheta^R~$ and for all $t\in[0,\tau
]$, a sequence
\[
\frac{1}{n}\sum_{i=1}^n
B(W_i,t;\lambda,\beta)
\]
converges to $b(t;\lambda,\beta)$ a.s. due to SLLN. The sequence is
equicontinuous a.s. on the compact set $\varTheta^R$, and the limiting
function is continuous on $\varTheta^R$. The latter three statements
ensure that the sequence converges to $b$ uniformly on $\varTheta^R$. Thus,
\begin{equation*}
\hat{b}(t)=\frac{1}{n}\sum_{i=1}^n
B\bigl(W_i;\hat{\lambda}^{(2)}_n,\hat {
\beta}^{(2)}_n,\hat{\varLambda}\bigr)\rightarrow b(t;
\lambda_0,\beta_0,\varLambda ), \quad t\in[0,\tau],
\end{equation*}
a.s. as $ n\rightarrow\infty$.

In a similar way for all $t\in[0,\tau]$,
\begin{equation*}
\label{a} \hat{a}(t)=\frac{1}{n}\sum_{i=1}^n
A\bigl(W_i;\hat{\lambda}^{(2)}_n,\hat {
\beta}^{(2)}_n,\hat{\varLambda}\bigr)\rightarrow a(t;
\lambda_0,\beta_0,\varLambda)
\end{equation*}
a.s. and
\begin{equation*}
\label{p} \hat{p}(t)=\frac{1}{n}\sum_{i=1}^n
P\bigl(W_i;\hat{\lambda}^{(2)}_n,\hat {
\beta}^{(2)}_n,\hat{\varLambda}\bigr)\rightarrow p(t;
\lambda_0,\beta_0,\varLambda)
\end{equation*}
a.s.
Then
\[
\hat{T}(t)\hat{K}(t)= \biggl(\hat{p}(t)-\frac{\hat{a}(t)\hat{a}^\top
(t)}{\hat{b}(t)} \biggr)\hat{
\lambda}^{(2)}_n(t)
\]
is a consistent estimator of $T(t)K(t)$, $t\in[0,\tau]$.

\begin{defin}
The Kaplan--Meier estimator of the survival function of censor $C$ is
defined as
\[
\hat{G}_C(u)= %
\begin{cases} \prod\limits_{j=1}^{n} ( \frac{N(Y_j)}{N(Y_j)+1}
)^{\tilde{\varDelta}_j I_{Y_j\leq u}}& \textrm{if}\ u\leq Y_{(n)};\\
0, \;& \textrm{otherwise},
\end{cases} %
\]
where $\tilde{\varDelta}_j:=1-\varDelta_j$, $N(u):=\sharp\{Y_i>u,\;
i=1,\ldots,n\}$, and $Y_{(n)}$ is the largest order statistic.
\end{defin}

We state the convergence of the Kaplan--Meier estimator. Remember that
$Y=\min\{T,C\}$. Let $G_Y(t)$ be the survival function of $Y$.

\begin{thm}[\cite{Foldes}]\label{KM} Assume the following:
\begin{enumerate}
\item[(a)] survival functions $G_T$ and $G_C$ are continuous, and
\item[(b)] it holds
\[
\min\bigl\{G_Y(S),1-G_Y(S)\bigr\}\geq\delta,
\]
for some fixed $0<S<\infty$ and $0<\delta<\frac{1}{2}~$.
\end{enumerate}
Then a.s. for all $n\geq2$,
\begin{equation}
\label{convKM} \sup_{1\leq i\leq n, Y_i\leq S}\big|\hat{G}_n(Y_i)-G_C(Y_i)\big|=O
\biggl(\sqrt {\frac{\ln n}{n}} \biggr).
\end{equation}
\end{thm}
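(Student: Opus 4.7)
My plan is to reduce the product-limit estimator to a Nelson--Aalen type estimator via the logarithm, to bound the latter by a martingale concentration inequality on the interval $[0,S]$ where the number at risk stays large, and finally to transfer the rate back to $\hat{G}_n$ using the Lipschitz property of $x\mapsto\exp(-x)$ on a compact set.

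First I set $N^+(u):=\#\{i:Y_i\geq u\}$ and $N^c(u):=\sum_{i=1}^n\tilde{\varDelta}_iI_{Y_i\leq u}$, noting that the paper's $N(Y_j)$ equals $N^+(Y_j)-1$ at the observed jump points. Taking the logarithm in the product-limit formula and expanding $\log(1+x)=x+O(x^2)$ represents $-\log\hat{G}_n(t)$ as the Nelson--Aalen estimator
\begin{equation*}
\hat{\varLambda}_n^{NA}(t):=\int_0^t\frac{dN^c(u)}{N^+(u)}
\end{equation*}
plus a remainder controlled by $\sum_j\tilde{\varDelta}_jI_{Y_j\leq t}/N^+(Y_j)^2$. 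Under the lower bound on $N^+$ established below, this remainder is $O(1/n)$ uniformly on $[0,S]$, which is strictly better than the target rate.

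Next, since $T$ and $C$ are independent and $G_C$ is continuous, the compensator of $\tilde{\varDelta}_iI_{Y_i\leq t}$ is $\int_0^tI_{Y_i\geq u}\,d\varLambda_C(u)$, with $\varLambda_C:=-\log G_C$. Hence
\begin{equation*}
\hat{\varLambda}_n^{NA}(t)-\varLambda_C(t)=\int_0^t\frac{dM_n(u)}{N^+(u)},\quad M_n(t):=N^c(t)-\int_0^tN^+(u)\,d\varLambda_C(u),
\end{equation*}
and $M_n$ is a square-integrable martingale with predictable variation process $\langle M_n\rangle(t)=\int_0^tN^+(u)\,d\varLambda_C(u)\leq n\varLambda_C(S)$ on $[0,S]$. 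Assumption (b), together with the classical Glivenko--Cantelli theorem applied to $G_Y$, gives $N^+(u)/n\geq\delta/2$ for all $u\leq S$ eventually, so the stochastic integrand is bounded by $2/(n\delta)$. A Bernstein-type exponential inequality for martingales with bounded jumps, followed by a discretisation over the at most $n$ jump points of $N^c$ and a Borel--Cantelli argument with threshold $\sqrt{c\ln n/n}$ for $c$ large enough, then yields
\begin{equation*}
\sup_{t\leq S}\big|\hat{\varLambda}_n^{NA}(t)-\varLambda_C(t)\big|=O\left(\sqrt{\frac{\ln n}{n}}\right)\quad\textrm{a.s.}
\end{equation*}

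Finally, combining the previous display with the remainder bound and using that $G_C(S)\geq G_Y(S)\geq\delta>0$ (so that both $\hat{G}_n$ and $G_C$ stay in a fixed compact subset of $(0,1]$ eventually on $[0,S]$), the local Lipschitz property of $\exp(-\cdot)$ converts the logarithmic rate into the claimed rate for $|\hat{G}_n(Y_i)-G_C(Y_i)|$ on $\{Y_i\leq S\}$. The main obstacle, in my view, is the passage from an in-probability concentration bound on the martingale to a uniform almost sure bound: one must either combine a subgaussian tail inequality with Borel--Cantelli on a sufficiently dense deterministic net, or follow Foldes and Rejto in exploiting the stepwise structure of the estimator, so that a union bound over the $O(n)$ jump times loses only the $\sqrt{\ln n}$ factor that appears in the final rate.
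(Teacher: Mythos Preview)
The paper does not prove this theorem. Theorem~\ref{KM} is quoted verbatim from F\"oldes and Rejt\H{o} (1981) and is used as an external input; the only surrounding text verifies that their hypotheses (a) and (b) are met in the present Cox model under assumptions (v) and continuity of $G_C$. There is therefore no ``paper's own proof'' to compare your argument against.

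That said, your martingale route is a legitimate and rather different argument from the one F\"oldes and Rejt\H{o} actually gave. Their 1981 proof predates the counting-process/martingale machinery you invoke: they work directly with the product-limit form, decompose $\hat{G}_n-G_C$ into a leading empirical-process term and a negligible remainder, and then control the empirical pieces by Dvoretzky--Kiefer--Wolfowitz type exponential bounds for the empirical distribution functions of $Y$ and of the uncensored subsample. Your approach instead passes through the Nelson--Aalen cumulative hazard, uses the Doob--Meyer compensator identity for $N^c$, and applies a Freedman/Bernstein martingale inequality. Both lead to the same $\sqrt{\ln n/n}$ rate, and both lose exactly the logarithmic factor to a union bound over the $O(n)$ jump times.

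One technical point in your sketch deserves care. The bounds ``jumps $\leq 2/(n\delta)$'' and ``predictable variation $\leq 2\varLambda_C(S)/(n\delta)$'' for $\int_0^{\cdot} dM_n/N^+$ hold only on the random event $\{N^+(u)\geq n\delta/2\ \text{for all }u\leq S\}$, whereas the Bernstein/Freedman inequality needs deterministic bounds. The clean fix is to apply the inequality to the martingale with integrand $\min\{1/N^+(u),\,2/(n\delta)\}$, which has the required deterministic bounds, and then observe that this truncated integrand coincides with $1/N^+(u)$ on the good event, which by Glivenko--Cantelli holds for all $n\geq n_0(\omega)$ almost surely. Since the left-hand side of (\ref{convKM}) is trivially bounded by $1$ for the finitely many $n<n_0(\omega)$, this suffices for the almost-sure $O(\sqrt{\ln n/n})$ statement for all $n\geq 2$.
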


In our model, the lifetime $T$ has a continuous survival function, and
if we assume that the same holds true for the censor $C$, then the
first condition of Theorem \ref{KM} is satisfied. Next, it holds
$G_Y(t)=G_T(t)G_C(t)$ and due to condition (v) for all small enough
positive $\varepsilon$ there exists $0<\delta<\frac{1}{2}$ such that
\[
\delta\leq G_T(\tau-\varepsilon)G_C(\tau-\varepsilon)
\leq1-\delta.
\]
Therefore, the second condition holds as well, with $S=\tau-\varepsilon$.

Relation (\ref{convKM}) is equivalent to the following: there exists a
random variable $C_S(\omega)$ such that a.s. for all $n\geq2$,
\begin{equation*}
\sup_{ 0\leq u \leq S}\big|\hat{G}_{C}(u)-G_C(u)\big|\leq
C_S(\omega) \sqrt {\frac{\ln n}{n}}~.
\end{equation*}

Let 
\[
\hat{M}=\int_0^{Y_{(n)}} \hat{T}(u) \hat{K}(u)
\hat{G}_C(u) du.
\]
We have
%
\begin{align}
\label{M} \|\hat{M}-M\|&= \Biggl\llvert \Biggl\llvert \int
_0^{Y_{(n)}} \bigl(\hat{T}(u) \hat {K}(u)
\hat{G}_C(u)-T(u)K(u)G_C(u) \bigr)du+
\nonumber
\\
&\quad+ \int_{Y_{(n)}}^{\tau}T(u)K(u)G_C(u)
du \Biggr\rrvert \Biggr\rrvert
\nonumber
\\
&\leq\sup_{ 0\leq u \leq\tau}\big\|\hat{T}(u)\hat{K}(u)-T(u)K(u)\big\| \int
_0^{Y_{(n)}}\hat{G}_C(u) du
\nonumber
\\
&\quad+\int_0^{Y_{(n)}}\big\|T(u)K(u)\big\|\cdot\big|
\hat{G}_C(u)-G_C(u)\big|du
\nonumber
\\
&\quad+G_C(Y_{(n)})\int_{Y_{(n)}}^{\tau}\big\|T(u)K(u)\big\|
du.
\end{align}
%
Due to the above-stated consistency of $\hat{T}(\cdot)\hat{K}(\cdot)$
and since $\hat{G}_C $ is bounded by 1, the first summand in (\ref{M})
converges to zero a.s. as $n\to\infty$.

Consider the second summand. Let $S=\tau-\varepsilon$ for some fixed
$\varepsilon>0$. There are two possibilities:
$Y_{(n)}\leq S$ and $ S< Y_{(n)}\leq\tau$. In the first case,
\[
\int_0^{Y_{(n)}}\big\|T(u)K(u)\big\|\cdot\big|
\hat{G}_C(u)-G_C(u)\big|du\leq\textrm {const}\cdot\sup
\limits
_{ 0\leq u \leq S}\big|\hat{G}_C(u)-G_C(u)\big|.
\]
In the second case,
\begin{align*}
&\int_0^{Y_{(n)}}\big\|T(u)K(u)\big\|\cdot\big|
\hat{G}_C(u)-G_C(u)\big|du\\
&\quad\leq\textrm {const} \Bigl( \sup
\limits_{ 0\leq u \leq S}\big|
\hat{G}_C(u)-G_C(u)\big|
+\int_S^{Y_{(n)}}\big|\hat{G}_C(u)-G_C(u)\big|
du \Bigr)\\
&\quad \leq\textrm {const} \Bigl(\sup\limits
_{ 0\leq u \leq S}\big|\hat {G}_C(u)-G_C(u)\big|+Y_{(n)}-S
\Bigr).
\end{align*}

It holds that $Y_{(n)}\to\tau\quad\text{a.s. } $ Utilizing Theorem
\ref{KM}, we first tend $n\to\infty$ and then $\varepsilon\to0$ and
obtain convergence of the second summand of (\ref{M}) to 0 a.s. as $n\to
\infty$.

The convergence of $Y_{(n)}$ yields the convergence of the third
summand. Finally,
\[
\|\hat{M}-M\|\to0\quad\text{a.s. \;as} \quad n\to\infty.
\]

Because $\mathsf{E}\zeta_i=0$, it holds $\varSigma_{\beta}=4\cdot\mathsf
{E}\zeta_1\zeta_1^\top$.
Therefore, we take
\begin{align*}
\hat{\varSigma}_{\beta}&=\frac{4}{n}\sum
_{i=1}^n \hat{\zeta_i}\hat{\zeta
}_i^\top, \quad\text{with}\\
\hat{\zeta}_i&=-\frac{\varDelta_i \hat{a}(Y_i)}{\hat{b}(Y_i)}+\frac{\exp
(\hat{\beta}^{(2)T}_n W_i)}{M_U(\hat{\beta}^{(2)}_n)}\int
_0^{Y_i}\hat {a}(u)\hat{K}(u)du+\frac{\partial q}{\partial\beta}
\bigl(Y_i,\varDelta _i,W_i,\hat{
\beta}^{(2)}_n,\hat{\lambda}^{(2)}_n
\bigr),
\end{align*}
as an estimator of $\varSigma_{\beta}$.
We have
\[
\hat{\varSigma}_{\beta}\to\varSigma_{\beta} \quad\text{a.s. as}
\quad n\to \infty.
\]
Then
\begin{equation}
\label{estM} \hat{M}^{-1}\hat{\varSigma}_{\beta}
\hat{M}^{-1}\to M^{-1}\varSigma_{\beta}
M^{-1} \quad\text{a.s., }
\end{equation}
and \textit{eventually} $\hat{M}^{-1}\hat{\varSigma}_{\beta}\hat{M}^{-1}>0$.
Convergences (\ref{beta2}) and (\ref{estM}) yield
\[
\sqrt{n} \bigl(\hat{M}^{-1}\hat{\varSigma}_{\beta}
\hat{M}^{-1} \bigr)^{-1/2}\bigl(\hat{\beta}^{(2)}_n-
\beta_0\bigr)\xrightarrow{\text{d}}N(0,I_m).
\]
Thus,
\begin{align*}
 & \bigl\| \sqrt{n} \bigl(
\hat{M}^{-1}\hat{\varSigma}_{\beta}\hat {M}^{-1}
\bigr)^{-1/2}\bigl(\hat{\beta}^{(2)}_n-
\beta_0\bigr) \bigr\| ^2
\\
&\quad=n\bigl(\hat{\beta}^{(2)}_n-\beta_0
\bigr)^\top \bigl(\hat{M}^{-1}\hat{\varSigma }_{\beta}
\hat{M}^{-1} \bigr)^{-1}\bigl(\hat{\beta}^{(2)}_n-
\beta _0\bigr)\xrightarrow{\text{d}}\chi^2_m. %
\end{align*}

Given a confidence probability $1-\alpha$, the asymptotic confidence
ellipsoid for $\beta$ is the set
\begin{equation*}
E_n=\biggl\{z\in\mathbb{R}^m~\big|~\bigl(z-\hat{
\beta}^{(2)}_n\bigr)^\top\bigl(
\hat{M}^{-1}\hat {\varSigma}_{\beta}\hat{M}^{-1}
\bigr)^{-1}\bigl(z-\hat{\beta}^{(2)}_n\bigr)\leq
\frac
{1}{n}\bigl(\chi^2_m\bigr)_\alpha
\biggr\}.
\end{equation*}
Here $(\chi^2_{m})_\alpha$ is the upper quantile of $\chi^2_{m}$ distribution.

\section{Confidence intervals for the baseline hazard rate}\label{ConfBHR}
Theorem \ref{th3} implies the following statement.
\begin{cor}\label{coro} Let $0<\varepsilon<\tau$.
Assume that the censor $C$ has a bounded pdf on $[0, \tau- \varepsilon
]$. Under conditions (i) -- (x),
for any Lipschitz continuous function $f$ on $[0,\tau]$ with support on
$[0,\tau-\varepsilon]$,
\begin{equation*}
\label{lamb} \sqrt{n}\int_0^{\tau-\varepsilon}\bigl(\hat{
\lambda}^{(2)}_n-\lambda _0\bigr) (u)f(u)du
\xrightarrow{\text{d}}N\bigl(0,\sigma^2_{\varphi}(f)\bigr),
\end{equation*}
where $\sigma^2_{\varphi}(f)=\sigma^2_{\varphi}$ with $\varphi=(\varphi
_{\lambda},\varphi_{\beta})$, $\varphi_{\beta}=-A^{-1}m(\varphi_{\lambda
})$ and $\varphi_\lambda$ is a unique solution in $C[0, \tau]$ to the
Fredholm integral equation
\begin{equation}
\label{phieq} \frac{\varphi_\lambda(u)}{K(u)}-a^\top(u)A^{-1}m(
\varphi_{\lambda
})=\frac{f(u)}{G_C(u)},\quad u\in[0, \tau].
\end{equation}
Here we set $\frac{f(\tau)}{G_C(\tau)}=0$. Notice that $\frac{1}{G_C}$
is Lipschitz continuous on $[0,\tau-\varepsilon]$.
\end{cor}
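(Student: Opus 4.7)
The plan is to deduce the corollary from Theorem~\ref{th3} applied to the test function $g(u):=f(u)/G_C(u)$, with the convention $g(\tau):=0$. With this choice, $g(u)G_C(u)$ equals $f(u)$ on $[0,\tau-\varepsilon]$ and vanishes on $(\tau-\varepsilon,\tau]$ because $f$ is supported in $[0,\tau-\varepsilon]$; hence $\int_0^\tau(\hat\lambda^{(2)}_n-\lambda_0)(u)g(u)G_C(u)\,du$ coincides with $\int_0^{\tau-\varepsilon}(\hat\lambda^{(2)}_n-\lambda_0)(u)f(u)\,du$. Moreover, Theorem~\ref{th3} defines the limiting variance through the Fredholm equation whose right-hand side is the chosen test function, which for this $g$ is exactly the equation \eqref{phieq}, so the limit law $N(0,\sigma^2_\varphi(f))$ stated in the corollary matches the one produced by Theorem~\ref{th3}.

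The only nontrivial verification is that $g$ is Lipschitz on $[0,\tau]$, which is the hypothesis required by Theorem~\ref{th3}. I would argue this piecewise. On $[0,\tau-\varepsilon]$ monotonicity of $G_C$ together with assumption~(v) gives $G_C(u)\geq G_C(\tau-\varepsilon)>0$, so $G_C$ is bounded away from zero on this sub-interval. The assumed boundedness of the censor's density on $[0,\tau-\varepsilon]$ yields a Lipschitz estimate for $G_C$ on the same sub-interval, with Lipschitz constant equal to the supremum of the density. Combining these two facts, $1/G_C$ is Lipschitz on $[0,\tau-\varepsilon]$, and multiplying by the bounded Lipschitz function $f$ shows that $g=f/G_C$ is Lipschitz there. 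On $[\tau-\varepsilon,\tau]$ the function $g$ vanishes identically by the support condition on $f$ (together with the convention at $\tau$), hence is trivially Lipschitz.

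To patch the two pieces into a single Lipschitz function on $[0,\tau]$, I would use that continuity of $f$ together with the support condition forces $f(\tau-\varepsilon)=0$, so both pieces agree at the joining point. A standard gluing argument then shows that the maximum of the two local Lipschitz constants serves as a global Lipschitz constant on $[0,\tau]$. With Lipschitz-ness of $g$ in hand, Theorem~\ref{th3} applies directly and delivers the convergence in distribution asserted by the corollary. I expect the main subtle points to be the gluing across $\tau-\varepsilon$ and the convention at $\tau$, where both numerator and denominator vanish; both are handled by the support hypothesis on $f$ combined with the assumed boundedness of the density of $C$ on $[0,\tau-\varepsilon]$.
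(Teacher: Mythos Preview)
Your proposal is correct and follows essentially the same approach as the paper, which gives no proof beyond the sentence ``Theorem~\ref{th3} implies the following statement'' and the remark that $1/G_C$ is Lipschitz on $[0,\tau-\varepsilon]$. You have correctly supplied the details: apply Theorem~\ref{th3} with test function $g=f/G_C$, verify that $g$ is globally Lipschitz on $[0,\tau]$ via the bounded-density assumption and the support condition on $f$, and observe that the resulting integral and Fredholm equation coincide with those in the corollary.
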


We show that asymptotic variance $\sigma^2_{\varphi}$ is positive and
construct its consistent estimator.

\begin{defin}
A random variable $\xi$ is called nonatomic if
$\mathsf{P}(\xi=x_0)=0$, for all \(x_0 \in\mathbb{R}.\)
\end{defin}

\begin{lem}\label{lemVar}
Suppose that assumptions of Corollary \ref{coro} are satisfied.
Additionally assume the following:
\begin{enumerate}
\item[(xi)] $m(\varphi_{\lambda})\neq0$, for $\lambda=\lambda_0$ and
$\beta=\beta_0$.
\item[(xii)] For all nonzero $z\in\mathbb{R}^m$, at least one of random
variables $z^\top X$ and $z^\top U$ is nonatatomic.
\end{enumerate}
Then $\sigma^2_{\varphi}(f)\neq0$.
\end{lem}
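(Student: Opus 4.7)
The plan is to argue by contradiction. Suppose $\sigma^2_\varphi(f) = 0$; since $\sigma^2_\varphi = 4\,\mathsf{Var}(\xi)$, this amounts to $\xi(Y,\Delta,W) = c_0$ almost surely for some constant $c_0$. The target is to force each of $\varphi_\beta^\top X$ and $\varphi_\beta^\top U$ to be almost surely constant, which will contradict assumption (xii), because $\varphi_\beta = -A^{-1} m(\varphi_\lambda) \neq 0$ by (xi).

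First I would restrict the identity $\xi = c_0$ to the positive-probability event $\{\Delta = 1\}$, on which $Y = T$. Substituting $W = X + U$ into (\ref{xi}) yields a continuous equation
\[
F(X,U,T) := \frac{\varphi_\lambda(T)}{\lambda_0(T)} + \varphi_\beta^\top(X+U) - \frac{e^{\beta_0^\top(X+U)}}{M_U(\beta_0)}\Bigl[h_\lambda(T) + \Lambda(T)\varphi_\beta^\top\bigl(X+U - \tfrac{\mathsf{E}[Ue^{\beta_0^\top U}]}{M_U(\beta_0)}\bigr)\Bigr] = c_0,
\]
with $h_\lambda(t) := \int_0^t \varphi_\lambda$. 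Using that $C$ is independent of $(X,U,T)$ and that $G_C(t) > 0$ for all $t < \tau$ by (v), the a.s.\ equality on $\{T \leq C\}$ lifts to $F(X,U,T) = c_0$ almost surely on $\{T < \tau\}$. By continuity of $F$, the closed set $\{F = c_0\}$ contains the support of $(X,U,T)$ restricted to $\{t < \tau\}$. Since $\lambda_0(0) > 0$ by (vii), the density of $T$ is positive at $t = 0$, and the independence $U \perp (X,T)$ gives $\mathrm{supp}(X) \times \mathrm{supp}(U) \times \{0\} \subset \mathrm{supp}(X,U,T)$.

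Next I would evaluate at $t = 0$: since $h_\lambda(0) = \Lambda(0) = 0$, the exponential bracket collapses and the identity reduces to
\[
\varphi_\beta^\top(x+u) = c_0 - \varphi_\lambda(0)/\lambda_0(0) \quad \text{for every } (x,u) \in \mathrm{supp}(X) \times \mathrm{supp}(U).
\]
Hence $\varphi_\beta^\top X + \varphi_\beta^\top U$ is almost surely constant, and because $X \perp U$, the identity $\mathsf{Var}(\varphi_\beta^\top X) + \mathsf{Var}(\varphi_\beta^\top U) = 0$ forces each of $\varphi_\beta^\top X$ and $\varphi_\beta^\top U$ to be almost surely constant. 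With $z = \varphi_\beta \neq 0$, this contradicts (xii) and completes the proof.

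The delicate step is the measure-theoretic passage in the second paragraph: lifting the a.s.\ identity on the event $\{T \leq C\}$ to a pointwise identity on the full product support of $(X,U,T)$. It rests on the independence structure of $(X,T,C,U)$, continuity of $F$, and conditions (v) and (vii); the identification $0 \in \mathrm{supp}(T)$ via $\lambda_0(0) > 0$ is what makes the whole argument localize cleanly at $t = 0$. Everything afterwards --- the evaluation that kills the exponential bracket and the splitting of a constant sum of two independent terms --- is routine.
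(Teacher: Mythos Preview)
Your proof is correct, but it takes a genuinely different route from the paper's.

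The paper restricts to the censoring event $\{\varDelta = 0\}$ rather than $\{\varDelta = 1\}$. On $\{\varDelta = 0\}$ the two $\varDelta$-multiplied terms in $\xi$ vanish outright, and what remains is (up to the factor $M_U^2$) a linear expression $\alpha_W \int_0^C \varphi_\lambda + \gamma_W \int_0^C \lambda_0$ in $(C,W)$. Using $C>0$ a.s.\ (assumption~(x)) and $\lambda_0>0$ (assumption~(vii)) so that $\int_0^C \lambda_0 > 0$, one solves for $\varphi_\beta^\top W$ and finds it must equal a nonrandom number $v_C$ depending on $C$; since $C\perp W$ and $\varphi_\beta^\top W$ is nonatomic by (xi)--(xii), this event has probability zero, contradicting $\mathsf{P}(\varDelta=0)>0$.

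Your argument instead conditions on $\{\varDelta = 1\}$, where the formula for $\xi$ is more complicated, and then compensates by lifting to $\{T<\tau\}$ via independence of $C$ and localizing at $t=0$ through a support/continuity argument to kill the integral terms. The payoff is a slightly stronger intermediate conclusion (that $\varphi_\beta^\top W$ is \emph{constant}, not merely that it hits a given value), and you never invoke assumption~(x). The cost is the extra measure-theoretic step you flag as delicate; that step is sound as you describe it, but the paper's route sidesteps it entirely by choosing the branch of $\xi$ that is already algebraically simple.
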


\begin{proof}
We prove by contradiction. For brevity we drop zero index writing
$\varphi_\lambda=\varphi_{\lambda_0}$, $\varphi_\beta=\varphi_{\beta
_0}$ and omit arguments where there is no confusion. In particular, we
write $M_U$ instead of $M_U(\beta_0)$ and $\sigma^2_{\varphi}$ instead
of $\sigma^2_{\varphi}(f)$.

Denote
$\eta=\xi(C,0,W)$. From (\ref{xi}) we get
\[
M_U^2\cdot\eta=\int_0^C
\bigl(\alpha_W \varphi_{\lambda}(u)+\gamma _W
\lambda_0(u) \bigr)du,
\]
with
\[
\alpha_W:=-M_U\cdot\exp\bigl(\beta_0^\top
W\bigr),\quad\gamma_W:=-\varphi _{\beta}^\top
\bigl(M_U \cdot W- \mathsf{E}\bigl(U e^{\beta^\top
_0U}\bigr). \bigr)
\]

Suppose that $\sigma^2_{\varphi}=0$. This yields $\xi=0$ a.s. Then
\[
\eta=\xi\cdot I(\varDelta=0)=0 \quad \text{a.s}.
\]
It holds $\mathsf{P}(\varDelta=0)>0$ and according to (x), $C>0$ a.s.
Thus, in order to get a contradiction it is enough to prove that
\begin{equation}
\label{contr} \mathsf{P} (\eta=0~|~C>0 )=0.
\end{equation}
Since $C$ and $W$ are independent, it holds
\begin{equation*}
\label{expec} \mathsf{P} (\eta=0~|~C>0 )=\mathsf{E}[\pi_x|_{x=C}~|~C>0],
\end{equation*}
where for $x\in(0,\tau]$,
\begin{align*}
\pi_x:&=\mathsf{P} \Biggl( \int
_0^x \bigl(\alpha_W
\varphi_{\lambda
}(u)+\gamma_W\lambda_0(u)
\bigr)du=0 \Biggr)
\\
&=\mathsf{P} \Biggl( M_U \int_0^x
\varphi_{\lambda}(u)du +\varphi_{\beta
}^\top
\bigl(M_U \cdot W- \mathsf{E}\bigl(U e^{\beta^\top_0U}\bigr) \bigr)
\int_0^x \lambda_0(u) du=0
\Biggr)
\\
&=\mathsf{P} \bigl(~ \varphi_{\beta}^\top W =v_x
\bigr).
\end{align*}
Here $v_x$ is a nonrandom real number. In the latter equality we use
assumption (vii) to guarantee that $~~\int_0^x \lambda_0(u) du>0$.

Further, $\varphi_{\beta}=-A^{-1}m(\varphi_{\lambda})\neq0$
because according to (xi) $m(\varphi_{\lambda})\neq0$.
Using independence of $X$ and $U$ together with assumption (xii), we
conclude that
for all nonzero $z\in\mathbb{R}^m$, $z^\top W=z^\top X+z^\top U$ is
nonatomic. Then $\varphi_{\beta}^\top W$ is nonatomic as well and $\pi_x=0$.

Thus, $\mathsf{P} (\eta=0~|~C>0 )=0$ which proves (\ref
{contr}). Therefore, $\sigma^2_{\varphi}(f)\neq0$.\
\end{proof}

Now, we can construct an estimator for the asymptotic variance $\sigma
^2_{\varphi}~$. Rewrite
\[
A=\mathsf{E} \Biggl[XX^\top e^{\beta_0^\top X} \int_0^Y
\lambda_0 (u) du \Biggr]=\int_0^\tau
\lambda_0 (u) p(u) G_C(u) du.
\]
Let
\begin{equation*}
\hat{A}=\int_0^{Y_{(n)}} \hat{\lambda}^{(2)}_n
(u)\hat{p}(u)\hat {G}_C(u) du.
\end{equation*}
Results of Section~\ref{ConRpar} yield that $\hat{A}$ is a consistent
estimator of $A$. Denote
\[
\hat{m}(\varphi_\lambda)=\int_0^{Y(n)}
\varphi_\lambda(u)\hat{a}(u)\hat{G}_C(u)du
\]
and define $\hat{\varphi}_\lambda$ as a solution in $L_2[0,\tau]$ to
the Fredholm integral equation with a degenerate kernel
\begin{equation*}
\frac{\varphi_\lambda(u)}{\hat{K}(u)}-\hat{a}^\top\hat{T}(u)\hat {A}^{-1}
\hat{m}(\varphi_{\lambda})=\frac{f(u)}{\hat{G}_C(u)}~,\quad u\in [0, \tau].
\end{equation*}
\textit{Eventually}, a solution is unique because the limiting equation
(\ref{phieq}) has a unique solution. The function $\hat{\varphi}_\lambda
$ can be assumed right-continuous and it converges a.s. to $\varphi
_\lambda$ from (\ref{phieq}) in the supremum norm. Therefore,
\[
\hat{\varphi}_{\beta}=-\hat{A}^{-1}\hat{m}(\hat{
\varphi}_{\lambda})
\]
is a consistent estimator of $\varphi_\beta$.\vadjust{\goodbreak}

Finally, we construct an estimator of $\sigma_{\varphi}^2$. Put
\[
\hat{\sigma}^2_{\varphi}= \frac{4}{n-1}\sum
_{i=1}^{n} (\hat{\xi}_i-\bar{
\xi})^2,
\]
with
\begin{equation*}
\label{xiest} %
\begin{split} \hat{\xi}_i&:=
\frac{\varDelta_i \cdot\hat{\varphi}_\lambda(Y_i)}{\hat
{\lambda}^{(2)}_n(Y_i)}-\frac{\exp(\hat{\beta}^{(2)T}_n W_i)}{M_U(\hat
{\beta}^{(2)}_n)}\int_0^{Y_i}
\hat{\varphi}_\lambda(u)du +\varDelta_i \cdot\hat{
\varphi}_{\beta}^\top W_i
\\
&\ \quad- \hat{\varphi}_{\beta}^\top \frac{M_U(\hat{\beta}^{(2)}_n)W_i- \mathsf
{E}U e^{\hat{\beta}^{(2)T}_n U}}{M_U(\hat{\beta}^{(2)}_n)^2}\exp\bigl(\hat {
\beta}^{(2)T}_n W_i\bigr)\int
_0^{Y_i} \hat{\lambda}^{(2)}_n(u)du
\end{split} %
\end{equation*}
and
\[
\bar{\xi}:=\frac{1}{n}\sum_{i=1}^n
\hat{\xi}_i 
.
\]

Lemma \ref{lemVar} and the consistency of auxiliary estimators yield
the following consistency result.
\begin{thm}
Assume that condition (\ref{maincond})
together with conditions (i) -- (xii) are fulfilled and censor $C$ has
a continuous survival function. Then $\sigma^2_{\varphi}>0$ and
\begin{equation}
\label{sigmaest} \hat{\sigma}^2_{\varphi}\to\sigma^2_{\varphi}
\quad\text{a.s. as} \quad n\to\infty.
\end{equation}
\end{thm}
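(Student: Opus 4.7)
The positivity claim $\sigma^2_\varphi>0$ is immediate: Lemma \ref{lemVar} delivers it directly, since the hypotheses of the present theorem contain all of its assumptions. For the almost-sure convergence (\ref{sigmaest}) my plan is a standard ``oracle plus vanishing plug-in error'' decomposition. Let $\xi_i^\circ:=\xi(Y_i,\Delta_i,W_i;\lambda_0,\beta_0,\varphi_\lambda,\varphi_\beta)$ denote the oracle variables obtained by evaluating (\ref{xi}) at the true parameters and the true solution of the Fredholm equation (\ref{phieq}). Because $1/\lambda_0(Y)\le1/\epsilon$ by (ix) and the remaining summands of $\xi$ are dominated by $\mathrm{const}\cdot(1+\|W\|)e^{\|\beta_0\|\|W\|}$, conditions (iii), (iv) and (viii) yield $\mathsf{E}(\xi_1^\circ)^2<\infty$, and the classical SLLN gives
\[
\frac{1}{n}\sum_{i=1}^n \xi_i^\circ\to\mathsf{E}\xi_1^\circ,\qquad \frac{1}{n}\sum_{i=1}^n (\xi_i^\circ)^2\to\mathsf{E}(\xi_1^\circ)^2\quad\text{a.s.,}
\]
so the oracle sample variance converges almost surely to $\mathsf{Var}\xi_1^\circ=\sigma^2_\varphi/4$.

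The core remaining step is to prove
\[
R_n:=\frac{1}{n}\sum_{i=1}^n(\hat\xi_i-\xi_i^\circ)^2\to 0\quad\text{a.s.,}
\]
whereupon Cauchy--Schwarz $|\frac{1}{n}\sum \hat\xi_i^{\,2}-\frac{1}{n}\sum(\xi_i^\circ)^2|\le R_n^{1/2}(\frac{1}{n}\sum(|\hat\xi_i|+|\xi_i^\circ|)^2)^{1/2}$ together with the analogous bound for the sample means transfers the oracle convergence to $\hat\sigma^2_\varphi/4\to\sigma^2_\varphi/4$. To produce $R_n\to0$, I would treat each of the four summands in (\ref{xi}) as $h_j(Y,\Delta,W;\theta)$ with $\theta=(\lambda,\beta,\varphi_\lambda,\varphi_\beta)$ in a compact neighborhood $\mathcal N$ of $\theta_0$ (sup-norm on $\lambda$ and $\varphi_\lambda$, Euclidean on $\beta$ and $\varphi_\beta$), chosen so that $\lambda$ stays bounded below and $\beta$ stays in $\varTheta_\beta$. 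On $\mathcal N$ I would establish a random Lipschitz-type bound
\[
|h_j(Y,\Delta,W;\theta)-h_j(Y,\Delta,W;\theta_0)|\le L(Y,\Delta,W)\cdot d(\theta,\theta_0),
\]
with envelope $L(Y,\Delta,W)$ of the form $\mathrm{const}\cdot(1+\|W\|^2)e^{c\|W\|}$ for some $c<D$; this is achievable thanks to the interior-point assumption (viii), and then $\mathsf{E}L^2<\infty$ by (iii) and (iv).

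The main technical obstacle is verifying the uniform Lipschitz bound, because the summands of $\xi$ involve $1/\lambda(Y)$, $1/M_U(\beta)$, and the Fredholm-solution $\varphi_\lambda$. Each piece has to be checked: $\lambda\mapsto 1/\lambda$ is Lipschitz on $\{\mu_\lambda\ge\tfrac14\mu_{\lambda_0}\}$ with constant $(4/\mu_{\lambda_0})^2$; $\beta\mapsto M_U(\beta)$ and $\beta\mapsto\mathsf{E}[Ue^{\beta^\top U}]$ are smooth on $\varTheta_\beta$ by dominated convergence from (iii); and $\varphi_\lambda,\varphi_\beta$ enter linearly. Once the bound is in hand, the eventual lower bound $\hat\lambda_n^{(2)}\ge\tfrac14\mu_{\lambda_0}$ (from the lower-bound constraint built into the modified estimator together with $\mu_{\hat\lambda_n^{(1)}}\to\mu_{\lambda_0}>0$) places $\hat\theta_n\in\mathcal N$ eventually, and
\[
R_n \le \mathrm{const}\cdot d(\hat\theta_n,\theta_0)^2\cdot \frac{1}{n}\sum_{i=1}^n L(Y_i,\Delta_i,W_i)^2,
\]
whose right-hand side tends to $0\cdot\mathsf{E}L^2=0$ a.s.\ by $d(\hat\theta_n,\theta_0)\to 0$ (from the a.s.\ convergences of $\hat\lambda_n^{(2)},\hat\beta_n^{(2)},\hat\varphi_\lambda,\hat\varphi_\beta$ established earlier) and the SLLN applied to $L^2$. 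Summing over the finitely many summands $j$ yields $R_n\to 0$ a.s., which together with the SLLN step completes the proof of (\ref{sigmaest}).
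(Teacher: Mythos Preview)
Your argument is correct and follows the same route the paper indicates: positivity from Lemma~\ref{lemVar}, and convergence of $\hat\sigma^2_\varphi$ from the already-established a.s.\ consistency of the auxiliary estimators $\hat\lambda_n^{(2)},\hat\beta_n^{(2)},\hat\varphi_\lambda,\hat\varphi_\beta$. The paper itself gives only a one-line justification (``Lemma~\ref{lemVar} and the consistency of auxiliary estimators yield the following consistency result''), so your oracle-plus-plug-in decomposition with the Lipschitz envelope is exactly the kind of detail the paper elides rather than a different approach.
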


For fixed $\varepsilon>0$, consider an integral functional of the
baseline hazard rate, $I_f(\lambda_0)=\int_0^{\tau-\varepsilon} \lambda
_0(u) f(u) du$.
Corollary \ref{coro} gives
\begin{equation*}
\frac{\sqrt{n} (I_f(\hat{\lambda}^{(2)}_n)-I_f(\lambda_0))}{\sigma
_{\varphi}} \xrightarrow{\text{d}}N(0,1),
\end{equation*}
which together with (\ref{sigmaest}) yields
\begin{equation*}
\frac{\sqrt{n} (I_f(\hat{\lambda}^{(2)}_n)-I_f(\lambda_0))}{\hat{\sigma
}_{\varphi}} \xrightarrow{\text{d}}N(0,1).
\end{equation*}
Let
\begin{equation*}
I_n=\biggl[I_f\bigl(\hat{\lambda}^{(2)}_n
\bigr)-z_{\alpha/2}\frac{\hat{\sigma}_{\varphi
}}{\sqrt{n}}, I_f\bigl(\hat{
\lambda}^{(2)}_n\bigr)+z_{\alpha/2}\frac{\hat{\sigma
}_{\varphi}}{\sqrt{n}}
\biggr],
\end{equation*}
where $z_{\alpha/2}$ is the upper quantile of normal law. Then $I_n$ is
the asymptotic confidence interval for $I_f(\lambda_0)$.

\section{Computation of auxiliary estimators}\label{auxEst}

In Section~\ref{ConRpar}, we constructed estimators in a form of
absolutely convergent series expansions. E.g., in Theorem \ref{soleq}
(a) we derived an expansion of such kind for $t\in[0,\tau]$:
\[
B(W,t)=\sum_{k=0}^\infty B_k(W,t),
\quad\mathsf{E} B(W,t)=b(t)
\]
and
\[
\frac{1}{n}\sum_{i=1}^n
B(W_i,t)\rightarrow b(t),
\]
a.s. as $n\rightarrow\infty$. Now, we show that we can truncate the series.

Let $\{N_n: n\geq1\}$ be a strictly increasing sequence of nonrandom
positive integers. Fix $t$ for the moment and omit this argument $t$.
Consider the head of series $B(W_i)$,
\[
B_{N_i}(W_i):=\sum_{k=0}^{N_i}
B_k(W_i).
\]
Fix $j\geq1$, then for $n\geq j$ it holds:
\begin{align*}
\frac{1}{n}\sum_{i=j}^n
\big|B(W_i)-B_{N_i}(W_i)\big|&\leq\frac{1}{n}
\sum_{i=j}^n \sum
_{k=N_i+1}^\infty\big|B_k(W_i)\big|\\
&\leq
\frac{1}{n}\sum_{i=j}^n \sum
_{k=N_j+1}^\infty\big|B_k(W_i)\big|,\\
\limsup_{n\to\infty} \frac{1}{n}\sum
_{i=j}^n \big|B(W_i)-B_{N_i}(W_i)\big|
&\leq \limsup_{n\to\infty} \frac{1}{n}\sum
_{i=j}^n \sum_{k=N_j+1}^\infty\big|B_k(W_i)\big|\\
&=\mathsf{E}\sum_{k=N_j+1}^\infty\big|B_k(W_1)\big|.
\end{align*}
The latter expression tends to zero as $j\rightarrow\infty$.
Therefore, almost surely
\[
\lim_{j\to\infty}\limsup_{n\to\infty} \frac{1}{n}
\sum_{i=j}^n \big|B(W_i)-B_{N_i}(W_i)\big|=0.
\]
We conclude that
\[
\frac{1}{n}\sum_{i=1}^n
B_{N_i}(W_i)\rightarrow\mathsf{E} B(W_1)=b(t)
\]
a.s. as $n\rightarrow\infty$. Moreover, with probability one the
convergence is uniform in $(\lambda, \beta)$ belonging to a compact set.
Therefore, it is enough to truncate the series $B(W,t)$ by some large
numbers, which makes feasible the computation of estimators from
Section~\ref{ConRpar}.

\section{Conclusion}\label{Concl}
At the end of Section~\ref{ConRpar}, we constructed asymptotic
confidence intervals for integral functionals of the baseline hazard
rate $\lambda_0(\cdot)$, and at the end of Section~\ref{ConfBHR}, we
constructed an asymptotic confidence region for the regression
parameter $\beta$. We imposed some restrictions on the error
distribution. In particular, we handled the following cases:
(a) the measurement error is bounded, (b) it is normally distributed,
and (c) it has independent components which are shifted Poisson random
variables. Based on truncated series, we showed a way to compute
auxiliary estimates which are used in construction of the confidence sets.

In future we intend to elaborate a method to construct confidence
regions in case of heavy-tailed measurement errors.



\bibliographystyle{vmsta-mathphys}

\end{document}